\DeclareMathOperator{\Ext}{Ext}
\DeclareMathOperator{\Hom}{Hom}
\DeclareMathOperator{\pd}{pd}
\DeclareMathOperator{\id}{id}
\DeclareMathOperator{\add}{add}
\DeclareMathOperator{\End}{End}
\newtheorem{theorem}{Theorem}[section]
\newtheorem{prop}[theorem]{Proposition}
\theoremstyle{definition}
\newtheorem{mydef}[theorem]{Definition}
\newtheorem{remark}[theorem]{Remark}
\begin{document}

\thispagestyle{empty}

\title{A New Proof Concerning Quasi-Tilted Algebras}
\author{Stephen Zito\thanks{The author was supported by the University of Connecticut-Waterbury}}
        
\maketitle

\begin{abstract}
Let $\Lambda$ be a quasi-tilted algebra.  If $\Lambda$ is representation-finite, it was shown by Happel, Reiten, and Smal{\o} that $\Lambda$ is tilted.  We provide a new, short proof of this result.
\end{abstract}

\section{Introduction}
Tilting theory is one of the main themes in the study of the representation theory of algebras. Tilted algebras were introduced in $\cite{HR}$ by Happel and Ringel.  Namely, given a tilting module over a hereditary algebra, its endomorphism algebra is called a tilted algebra.  Tilted algebras have been the subject of much investigation and several characterizations are known.  One such characterization was obtained by Jaworska, Malicki, and  Skowro$\acute{\text{n}}$ski in $\cite{JMS}$.  They showed an algebra $\Lambda$ is tilted if and only if there exists a sincere module $M$ such that $\text{Hom}_{\Lambda}(M,\tau_{\Lambda}X)=0$ or $\text{Hom}_{\Lambda}(X,M)=0$ for every module $X\in\mathop{\text{ind}}\Lambda$.
\par  
Quasi-tilted algebras were introduced in $\cite{HRS}$ by Happel, Reiten, and Smal{\o} to give a general tilting theory for abelian categories.  Namely, a quasi-tilted algebra is an endomorphism algebra of a tilting object in a hereditary abelian category.  This definition is generally difficult to handle but the following result, also proved in $\cite{HRS}$, provides 
a useful characterization.  An algebra $\Lambda$ is quasi-tilted if and only if $\text{gl.dim}\leq2$ and, for each $X\in\mathop{\text{ind}}\Lambda$, $\pd_{\Lambda}X\leq1$ or $\id_{\Lambda}X\leq1$.
 Happel, Reiten, and Smal{\o} also proved a representation-finite quasi-tilted algebra is in fact tilted.  The purpose of this paper is to provide a new, short proof of this result using the aforementioned theorem of Jaworska, Malicki, and  Skowro$\acute{\text{n}}$ski.
 \par
 We set the notation for the remainder of this paper. All algebras are assumed to be finite dimensional over an algebraically closed field $k$.  If $\Lambda$ is a $k$-algebra then denote by $\mathop{\text{mod}}\Lambda$ the category of finitely generated right $\Lambda$-modules and by $\mathop{\text{ind}}\Lambda$ a set of representatives of each isomorphism class of indecomposable right $\Lambda$-modules.  Given $M\in\mathop{\text{mod}}\Lambda$, the projective dimension of $M$ in $\mathop{\text{mod}}\Lambda$ is denoted by $\pd_{\Lambda}M$ and its injective dimension by $\id_{\Lambda}M$.  We denote by $\add M$ the smallest additive full subcategory of $\mathop{\text{mod}}\Lambda$ containing $M$, that is, the full subcategory of $\mathop{\text{mod}}\Lambda$ whose objects are the direct sums of direct summands of the module $M$.  We let $\tau_{\Lambda}$ and $\tau^{-1}_{\Lambda}$ be the Auslander-Reiten translations in $\mathop{\text{mod}}\Lambda$.  $D$ will denote the standard duality functor $\Hom_k(-,k)$ and gl.dim will stand for the global dimension of an alegbra.

\subsection{Tilting Modules}
  We begin with the definition of tilting modules.
   \begin{mydef}
   \label{Tilting}
    Let $\Lambda$ be an algebra.  A $\Lambda$-module $T$ is a $\emph{partial tilting module}$ if the following two conditions are satisfied: 
   \begin{enumerate}
   \item[($\text{1}$)] $\pd_{\Lambda}T\leq1$.
   \item[($\text{2}$)] $\Ext_{\Lambda}^1(T,T)=0$.
   \end{enumerate}
   A partial tilting module $T$ is called a $\emph{tilting module}$ if it also satisfies:
   \begin{enumerate}
   \item[($\text{3}$)] There exists a short exact sequence $0\rightarrow \Lambda\rightarrow T'\rightarrow T''\rightarrow 0$ in $\mathop{\text{mod}}\Lambda$ with $T'$ and $T''$ $\in \add T$.
   \end{enumerate}
   \end{mydef}
 Tilting modules induce torsion pairs in a natural way.  We consider the restriction to a subcategory $\mathcal{C}$ of a functor $F$ defined originally on a module category, and we denote it by $F|_{\mathcal{C}}$   \begin{mydef} A pair of full subcategories $(\mathcal{T},\mathcal{F})$ of $\mathop{\text{mod}}\Lambda$ is called a $\emph{torsion pair}$ if the following conditions are satisfied:
   \begin{enumerate}
   \item[(a)] $\text{Hom}_{\Lambda}(M,N)=0$ for all $M\in\mathcal{T}$, $N\in\mathcal{F}.$
   \item[(b)] $\text{Hom}_{\Lambda}(M,-)|_\mathcal{F}=0$ implies $M\in\mathcal{T}.$
   \item[(c)] $\text{Hom}_{\Lambda}(-,N)|_\mathcal{T}=0$ implies $N\in\mathcal{F}.$
   \end{enumerate}
   \end{mydef}
   Consider the following full subcategories of $\mathop{\text{mod}}\Lambda$ where $T$ is a tilting $\Lambda$-module.
 \[
 \mathcal{T}(T)=\{M\in\mathop{\text{mod}}\Lambda~|~ \text{Ext}_{\Lambda}^{1}(T,M)=0\}
 \]
 \[
 \mathcal{F}(T)=\{M\in\mathop{\text{mod}}\Lambda~|~\text{Hom}_{\Lambda}(T,M)=0\}
 \]

 Then $(\mathcal{T}(T),\mathcal{F}(T))$ is a torsion pair in $\mathop{\text{mod}}\Lambda$ called the $\it{induced~torsion~pair}$ of $T$.  We refer the reader to $\cite{ASS}$ for more details.
 \par
 We say a torsion pair $(\mathcal{T},\mathcal{F})$
 is $\it{split}$ if every indecomposable $\Lambda$-module belongs to either $\mathcal{T}$ or $\mathcal{F}$.  We have the following characterization of split torsion pairs.  
  \begin{prop}$\emph{\cite[VI,~Proposition~1.7]{ASS}}$
 \label{split}
 Let $(\mathcal{T},\mathcal{F})$ be a torsion pair in $\mathop{\emph{mod}}\Lambda$.  The following are equivalent:
 \begin{enumerate}
 \item[$\emph{(a)}$] $(\mathcal{T},\mathcal{F})$ is split.
 \item[$\emph{(b)}$] If $M\in\mathcal{T}$, then $\tau_{\Lambda}^{-1}M\in\mathcal{T}$.
 \item[$\emph{(c)}$] If $N\in\mathcal{F}$, then $\tau_{\Lambda}N\in\mathcal{F}$.
 \end{enumerate}
 \end{prop}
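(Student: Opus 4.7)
The plan is to establish the cycle (b) $\Rightarrow$ (a) $\Rightarrow$ (b) using the Auslander--Reiten formula in one direction and the non-splitness of an almost split sequence in the other, and then to deduce (a) $\Leftrightarrow$ (c) by the dual argument. In both passes, the canonical torsion decomposition of an indecomposable plays the central role.

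For (b) $\Rightarrow$ (a), I would take an arbitrary $X \in \ind\Lambda$ and consider the canonical short exact sequence
\[
0 \to tX \to X \to X/tX \to 0,
\]
where $tX \in \mathcal{T}$ is the largest torsion submodule of $X$ and $X/tX \in \mathcal{F}$. The goal is to show that this sequence splits, which together with the indecomposability of $X$ forces $tX = 0$ or $X/tX = 0$. Applying the Auslander--Reiten formula
\[
\Ext^1_{\Lambda}(X/tX,\, tX) \;\cong\; D\,\underline{\Hom}_{\Lambda}(\tau_\Lambda^{-1}(tX),\, X/tX),
\]
hypothesis (b) places $\tau_\Lambda^{-1}(tX) \in \mathcal{T}$, and the defining Hom-vanishing of the torsion pair then kills the right-hand side.

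For (a) $\Rightarrow$ (b), I would take an indecomposable non-injective $M \in \mathcal{T}$, suppose for contradiction that $\tau_\Lambda^{-1} M \in \mathcal{F}$, and examine the almost split sequence
\[
0 \to M \xrightarrow{\,f\,} E \xrightarrow{\,g\,} \tau_\Lambda^{-1} M \to 0.
\]
Under (a), each indecomposable summand of $E$ lies in $\mathcal{T}$ or $\mathcal{F}$, yielding $E = E_1 \oplus E_2$ with $E_1 \in \mathcal{T}$ and $E_2 \in \mathcal{F}$. Because $\Hom_\Lambda(\mathcal{T}, \mathcal{F}) = 0$, the map $f$ lands in $E_1$ and $g$ factors through $E_2$. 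Chasing exactness then forces $f \colon M \xrightarrow{\sim} E_1$ together with an induced isomorphism $E_2 \xrightarrow{\sim} \tau_\Lambda^{-1} M$, so the sequence splits, contradicting its being almost split.

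The implications (c) $\Rightarrow$ (a) and (a) $\Rightarrow$ (c) are handled dually, replacing the first AR formula by $\Ext^1_{\Lambda}(X/tX, tX) \cong D\,\overline{\Hom}_{\Lambda}(tX, \tau_\Lambda(X/tX))$ and replacing the AR sequence starting at $M$ by the AR sequence ending at a non-projective indecomposable $N \in \mathcal{F}$. I expect the main technical obstacle to be the bookkeeping in (a) $\Rightarrow$ (b): one must verify that the torsion-pair Hom-vanishing, combined with exactness, actually forces the two maps of the AR sequence to assemble into compatible isomorphisms of $M$ with $E_1$ and of $\tau_\Lambda^{-1} M$ with $E_2$, thereby yielding the desired contradiction with non-splitness.
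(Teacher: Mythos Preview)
The paper does not supply its own proof of this proposition; it is quoted verbatim from \cite[VI,~Proposition~1.7]{ASS} and used as a black box. So there is nothing in the present paper to compare your argument against.

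That said, your outline is essentially the standard proof one finds in \cite{ASS}, and it is correct in substance. Two small points are worth cleaning up. First, in the step (b)~$\Rightarrow$~(a) the Auslander--Reiten formula you want is
\[
\Ext^1_\Lambda(X/tX,\,tX)\;\cong\;D\,\overline{\Hom}_\Lambda\bigl(\tau_\Lambda^{-1}(tX),\,X/tX\bigr),
\]
with the injectively stable $\overline{\Hom}$ rather than the projectively stable $\underline{\Hom}$; the conclusion is unaffected, since the full $\Hom$ already vanishes by the torsion-pair axiom. Second, in (a)~$\Rightarrow$~(b) it is worth making explicit that $\ker g = E_1 \oplus \ker(g|_{E_2})$ while $\operatorname{im} f \subseteq E_1$, so exactness forces both $\ker(g|_{E_2})=0$ and $f\colon M \xrightarrow{\sim} E_1$; this is exactly the bookkeeping you flagged, and it goes through without difficulty. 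The reduction to indecomposable $M$ (and the trivial case $M$ injective) should also be stated, since the proposition is phrased for arbitrary $M\in\mathcal{T}$.
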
 
We end this section with a result concerning which torsion pairs are induced by a tilting module in the case $\Lambda$ is representation-finite.
\begin{prop}$\emph{\cite[VI,~Corollary~6.6]{ASS}}$ 
 \label{repfinite}
 Let $\Lambda$ be a representation-finite algebra and $(\mathcal{T},\mathcal{F})$ be a torsion pair in $\mathop{\emph{mod}}\Lambda$.  Then there exists a tilting module $T$ such that $\mathcal{T}=\mathcal{T}(T)$ and $\mathcal{F}=\mathcal{F}(T)$ if and only if $\mathcal{T}$ contains the injectives.
 \end{prop}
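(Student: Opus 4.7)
The plan is to handle the two directions separately. The ``only if'' direction is immediate: if $T$ is tilting and $I$ is injective, then $\Ext^1_\Lambda(T,I)=0$ because $I$ is injective, and hence $I \in \mathcal{T}(T)$.

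For the converse, assume $\mathcal{T}$ contains every indecomposable injective. Because $\Lambda$ is representation-finite, one may form the module $T$ equal to the direct sum of one copy of each indecomposable \emph{Ext-projective} object of $\mathcal{T}$, that is, each indecomposable $M \in \mathcal{T}$ satisfying $\Ext^1_\Lambda(M,X)=0$ for every $X \in \mathcal{T}$. The claim is that this $T$ is a tilting module with $\mathcal{T}(T)=\mathcal{T}$.

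The verification breaks into the three conditions from Definition \ref{Tilting}, followed by the identification of the torsion pair. First, $\Ext^1_\Lambda(T,T)=0$ holds by construction, since $T \in \mathcal{T}$ and each indecomposable summand is Ext-projective in $\mathcal{T}$. Second, to establish $\pd_\Lambda T \leq 1$, one combines the injective hypothesis with Ext-projectivity: for each indecomposable summand $M$, take the injective envelope $0 \to M \to I \to L \to 0$, note that $I \in \mathcal{T}$ by hypothesis and $L \in \mathcal{T}$ because torsion classes are closed under quotients, and dimension-shift along the long exact sequence of $\Ext^{\ast}_\Lambda(-,X)$ to propagate the vanishing $\Ext^1_\Lambda(M,L)=0$ into $\Ext^2_\Lambda(M,X)=0$ for arbitrary $X$. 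Third, the short exact sequence $0 \to \Lambda \to T' \to T'' \to 0$ with $T',T'' \in \add T$ is produced summand by summand, using a minimal left $\add T$-approximation of each indecomposable projective; representation-finiteness ensures these approximations exist as finitely generated modules, and a Wakamatsu-type argument identifies the cokernel again in $\add T$. Finally, the inclusion $\mathcal{T} \subseteq \mathcal{T}(T)$ is immediate from Ext-projectivity, while the reverse inclusion uses the identity $\mathcal{T}(T)=\Gen T$ valid for tilting modules, together with the fact that $\mathcal{T}$ contains $T$ and is closed under sums and quotients.

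The main obstacle is the $\pd_\Lambda T \leq 1$ step: it is the only part that genuinely uses the injective hypothesis, and it is not a formal consequence of the definitions. A clean way to organize it is to complement the long-exact-sequence chase with the Auslander-Reiten formula, converting second-order Ext vanishing into a statement about $\Hom_\Lambda(-,\tau_\Lambda M)$, which can then be controlled using the torsion pair structure and the $\tau$-behavior captured by Proposition \ref{split}.
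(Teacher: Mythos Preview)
The paper does not prove this proposition; it is cited from \cite[VI,~Corollary~6.6]{ASS} as a background result. Your overall plan---take $T$ to be the direct sum of the Ext-projectives of $\mathcal{T}$---matches the approach in \cite{ASS}, and your treatment of the ``only if'' direction, of $\Ext^1_\Lambda(T,T)=0$, of condition~(3), and of the equality $\mathcal{T}=\mathcal{T}(T)$ is fine.

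The genuine gap is the step $\pd_{\Lambda}M\le 1$. From the injective envelope $0\to M\to I\to L\to 0$ with $L\in\mathcal{T}$, the only long exact sequence in which $\Ext^1_{\Lambda}(M,L)$ appears comes from applying $\Hom_{\Lambda}(M,-)$, and it yields $\Ext^2_{\Lambda}(M,M)\cong\Ext^1_{\Lambda}(M,L)=0$, not $\Ext^2_{\Lambda}(M,X)=0$ for arbitrary $X$ (the functor $\Ext^{\ast}_{\Lambda}(-,X)$ you name does not contain the term $\Ext^1_{\Lambda}(M,L)$ at all). At best this trick gives $\Ext^{i}_{\Lambda}(M,N)=0$ for all $i\ge 1$ and all $N\in\mathcal{T}$, but says nothing about $\Ext^{2}_{\Lambda}(M,\mathcal{F})$. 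Your instinct to bring in the Auslander--Reiten formula is right, but Proposition~\ref{split} is not the tool: it concerns \emph{split} torsion pairs, and $(\mathcal{T},\mathcal{F})$ is not assumed to split. What \cite{ASS} does instead is show $\tau_{\Lambda}M\in\mathcal{F}$ for each indecomposable Ext-projective $M\in\mathcal{T}$: the Auslander--Reiten formula forces every map from a torsion module into $\tau_{\Lambda}M$ to factor through an injective, and applying this to the inclusion of the torsion part $t(\tau_{\Lambda}M)\hookrightarrow\tau_{\Lambda}M$ makes $t(\tau_{\Lambda}M)$ an injective direct summand of $\tau_{\Lambda}M$, hence zero. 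Once $\tau_{\Lambda}M\in\mathcal{F}$, one has $\Hom_{\Lambda}(D\Lambda,\tau_{\Lambda}M)=0$ because $D\Lambda\in\mathcal{T}$, and the criterion $\pd_{\Lambda}M\le 1\Leftrightarrow\Hom_{\Lambda}(D\Lambda,\tau_{\Lambda}M)=0$ from \cite[IV,~2.7]{ASS} finishes the argument.
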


 \subsection{Tilted Algebras}

 We now state the definition of a tilted algebra.
 \begin{mydef} Let $A$ be a hereditary algebra with $T$ a tilting $A$-module.  Then the algebra $\Lambda=\End_AT$ is called a $\emph{tilted algebra}$.
 \end{mydef}

There exists many characterizations of tilted algebras. One such characterization of tilted algebras was established in $\cite{JMS}$.  We recall that a $\Lambda$-module $M$ is $\text{sincere}$ if $\text{Hom}_{\Lambda}(\Lambda,M)\neq 0$ for every indecomposable summand of $\Lambda$.
\begin{theorem}$\emph{\cite[Theorem~1]{JMS}}$
\label{Sincere}
An algebra $\Lambda$ is tilted if and only if there exists a sincere module $M$ in $\mathop{\emph{mod}}\Lambda$ such that, for any indecomposable module $X$ in $\mathop{\emph{mod}}\Lambda$,  $\emph{Hom}_{\Lambda}(X,M)=0$ or $\emph{Hom}_{\Lambda}(M,\tau_{\Lambda}X)=0$.
\end{theorem}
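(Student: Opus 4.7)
The plan is to handle the two implications of Theorem~\ref{Sincere} separately.

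For the \emph{only if} direction, suppose $\Lambda = \End_A T$ with $A$ hereditary and $T$ a tilting $A$-module. The natural candidate for the sincere witness is $M = \Hom_A(T,DA)$, the image of the injective cogenerator of $\mathop{\text{mod}}A$ under the tilting functor $\Hom_A(T,-)$. Decomposing $DA$ into its indecomposable summands, the corresponding summands of $M$ cover all vertices of the quiver of $\Lambda$, so $M$ is sincere. To check the orthogonality condition for an arbitrary indecomposable $X\in\mathop{\text{ind}}\Lambda$, I would invoke the Brenner--Butler theorem: $T$ produces a torsion pair on $\mathop{\text{mod}}\Lambda$ whose classes correspond via the tilting equivalences to the torsion pair on $\mathop{\text{mod}}A$ induced by $T$. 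Every indecomposable $X$ belongs to one of these classes; transport the required Hom-vanishing across the appropriate equivalence to $\mathop{\text{mod}}A$, where hereditariness makes the verification direct.

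For the \emph{if} direction, the strategy is to manufacture a complete slice $\Sigma$ inside a component of the Auslander--Reiten quiver of $\Lambda$ starting from the sincere module $M$. The indecomposable summands of $M$, suitably completed with modules in neighboring $\tau$-orbits, should form a faithful section of some component of the AR-quiver; once such a $\Sigma$ is in hand, Ringel's classical theorem yields that $A := \End_\Lambda \Sigma$ is hereditary, $\Sigma$ is a tilting $A$-module, and $\End_A \Sigma \cong \Lambda$, so $\Lambda$ is tilted.

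The main obstacle is the second direction: promoting the weak data of a sincere module with the stated orthogonality into a genuine section. One needs to use the condition $\Hom_\Lambda(X,M)=0$ or $\Hom_\Lambda(M,\tau_\Lambda X)=0$ to control the local shape of the AR-quiver around the summands of $M$, to guarantee that the candidate subquiver is connected and meets each $\tau$-orbit exactly once, and to leverage sincerity to ensure the resulting section is large enough to be faithful. This Auslander--Reiten bookkeeping is the technical heart of the Jaworska--Malicki--Skowro\'nski argument.
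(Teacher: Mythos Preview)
The paper does not prove Theorem~\ref{Sincere} at all: it is quoted verbatim from \cite{JMS} and used as a black box in the proofs of Proposition~\ref{prop1} and the main theorem. So there is no ``paper's own proof'' to compare your proposal against; the author simply invokes the result.

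As for your sketch itself, the broad architecture matches the original Jaworska--Malicki--Skowro\'nski argument: the forward direction goes through the Brenner--Butler transfer and the image of $DA$, and the converse builds a faithful section (complete slice) in a component of the Auslander--Reiten quiver. But what you have written is a plan, not a proof. In the \emph{if} direction you have only named the destination (a section $\Sigma$) and the obstacle (showing connectedness, that each $\tau$-orbit is hit exactly once, and faithfulness); you have not supplied the mechanism by which the hypothesis ``$\Hom_\Lambda(X,M)=0$ or $\Hom_\Lambda(M,\tau_\Lambda X)=0$'' forces these structural facts. That mechanism---passing through the absence of short chains, control of $\Ext^1$ between summands of $M$, and the precise construction of $\Sigma$ from the $\tau$-orbits of the summands of $M$---is the entire content of \cite{JMS}, and nothing in your outline indicates how to carry it out. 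Even in the forward direction, ``transport the required Hom-vanishing across the appropriate equivalence'' hides a case analysis that needs to be written down. If you intend to reproduce the proof rather than cite it, both directions require substantially more detail.
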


\subsection{Quasi-Tilted Algebras}
Quasi-tilted algebras were introduced by Happel, Reiten, and Smal{\o} in $\cite{HRS}$.  The original definition can be difficult to use in practice but Happel, Reiten, and Smal{\o} also proved a useful homological characterization.  Given $X,Y\in\mathop{\text{ind}}\Lambda$, we denote $X\leadsto Y$ in case there exists a chain of nonzero morphisms
\[ 
X=X_0\xrightarrow{f_1}X_1\xrightarrow{f_2}\cdots X_{t-1}\xrightarrow{f_t} X_t=Y
\]
with $t\geq0$, between indecomposable modules.  In this case we say $X$ is a predecessor of $Y$ and $Y$ is a successor of $X$.  Observe that each indecomposable module is a predecessor and successor of itself.  Define the following subcategories of $\mathop{\text{ind}}\Lambda$:
\[
\mathcal{L}_{\Lambda}=\{X\in\mathop{\text{ind}}\Lambda: \text{if}~Y\leadsto X, \text{then} \pd_{\Lambda}Y\leq1\}
\]
\[
\mathcal{R}_{\Lambda}=\{X\in\mathop{\text{ind}}\Lambda: \text{if}~X\leadsto Y, \text{then} \id_{\Lambda}Y\leq1\}
\]
It follows from the definitions that $\mathcal{L}_{\Lambda}$ is closed under predecessors and $\mathcal{R}_{\Lambda}$ is closed under successors.

\begin{theorem}$\emph{\cite{HRS}}$
\label{HRS1}
Let $\Lambda$ be an algebra.  The following are equivalent.
\begin{enumerate}
\item[$\emph{(a)}$] $\Lambda$ is quasi-tilted.
\item[$\emph{(b)}$] $\emph{gl.dim}\leq2$ and, for each $X\in\mathop{\emph{ind}}\Lambda$, $\pd_{\Lambda}X\leq1$ or $\id_{\Lambda}X\leq1$.

\item[$\emph{(c)}$] $\mathcal{L}_{\Lambda}$ contains all indecomposable projective $\Lambda$-modules.
\item[$\emph{(d)}$] $\mathcal{R}_{\Lambda}$ contains all indecomposable injective $\Lambda$-modules.
\end{enumerate}
\end{theorem}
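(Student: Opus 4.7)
I would establish the theorem in three parts: the core homological equivalence $(a) \Leftrightarrow (b)$, and the two combinatorial equivalences $(b) \Leftrightarrow (c)$ and $(b) \Leftrightarrow (d)$.

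For $(a) \Rightarrow (b)$, the plan is to use the derived equivalence $F \colon D^b(\mathcal{H}) \to D^b(\mathop{\text{mod}}\Lambda)$ induced by the tilting object $T$ in the hereditary abelian category $\mathcal{H}$. Because $\mathcal{H}$ is hereditary, every indecomposable complex in $D^b(\mathcal{H})$ is concentrated in a single cohomological degree, so each indecomposable $\Lambda$-module comes either from an object of $\mathcal{H}$ or from an object of $\mathcal{H}[-1]$. Translating $\Ext_\Lambda^i$ back to $\Ext_\mathcal{H}^j$ through $F$, and using that only $\Ext_\mathcal{H}^0$ and $\Ext_\mathcal{H}^1$ can be nonzero, the first family is forced to satisfy $\id \leq 1$ and the second $\pd \leq 1$; that $\text{gl.dim}\,\Lambda \leq 2$ is immediate since no indecomposable $\Lambda$-module spans three consecutive degrees. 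For $(b) \Rightarrow (a)$, the plan is to run the HRS-tilt: the dichotomy in (b) produces a torsion pair $(\mathcal{T},\mathcal{F})$ on $\mathop{\text{mod}}\Lambda$ whose tilted heart $\mathcal{H} = \mathcal{F}[1] \ast \mathcal{T}$ inside $D^b(\mathop{\text{mod}}\Lambda)$ is hereditary, with the stalk complex $\Lambda$ recovering as a tilting object whose endomorphism algebra is $\Lambda$.

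For $(b) \Rightarrow (c)$: let $P$ be an indecomposable projective and $Y$ an indecomposable predecessor of $P$. If $\pd_\Lambda Y \geq 2$, then $\text{gl.dim}\leq 2$ forces $\pd_\Lambda Y = 2$, and (b) gives $\id_\Lambda Y \leq 1$. Combining the Auslander-Reiten formula with the existence of a nonzero chain $Y \leadsto P$, one extracts a nonvanishing $\Ext^2_\Lambda(-,Y)$, contradicting $\id_\Lambda Y \leq 1$. For $(c) \Rightarrow (b)$: first derive $\text{gl.dim}\,\Lambda \leq 2$ by contradiction, using that projective covers produce predecessors of projectives and the closure of $\mathcal{L}$ under predecessors forces the syzygy of any module with $\pd \geq 3$ to violate (c); then for any indecomposable $X$ with $\pd_\Lambda X = 2$, the fact that $X \notin \mathcal{L}$ combined with a dual argument using $\mathcal{R}$ forces $\id_\Lambda X \leq 1$. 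The equivalence $(b) \Leftrightarrow (d)$ follows by passing to $\mathop{\text{mod}}\Lambda^{\text{op}}$, noting that $\mathcal{L}_\Lambda$ and $\mathcal{R}_{\Lambda^{\text{op}}}$ swap under the standard duality $D$.

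The main obstacle is $(b) \Rightarrow (a)$: verifying that the HRS-tilted heart is actually hereditary requires careful bookkeeping in $D^b(\mathop{\text{mod}}\Lambda)$, and is the technical heart of the HRS paper. The combinatorial equivalences $(b) \Leftrightarrow (c), (d)$ are more elementary but still require the Auslander-Reiten formula to turn the existence of morphism chains into the vanishing or nonvanishing of appropriate Ext groups; the delicate point in $(c) \Rightarrow (b)$ is establishing $\text{gl.dim} \leq 2$ directly from the closure of $\mathcal{L}$ under predecessors, rather than assuming it.
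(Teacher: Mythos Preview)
The paper does not prove Theorem~\ref{HRS1}; it is quoted from \cite{HRS} as background and stated without proof, so there is no argument in the paper to compare your proposal against. Your outline is broadly faithful to the strategy in the original HRS memoir: the implication $(a)\Rightarrow(b)$ via the derived equivalence and the structure of indecomposables in $D^b(\mathcal{H})$, and the converse via the HRS tilt of a suitable torsion pair, are indeed how Happel--Reiten--Smal{\o} proceed, and you correctly flag $(b)\Rightarrow(a)$ as the hard direction.

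That said, your sketch of $(b)\Rightarrow(c)$ is too vague to stand as written. Saying that a nonzero chain $Y\leadsto P$ together with the Auslander--Reiten formula ``extracts a nonvanishing $\Ext^2_\Lambda(-,Y)$'' is not a step one can carry out directly: a path of nonzero maps between indecomposables does not by itself produce a nonzero $\Ext^2$. The actual argument in \cite{HRS} proceeds link by link along the chain, using the characterisations $\pd_\Lambda M\leq 1\iff \Hom_\Lambda(D\Lambda,\tau_\Lambda M)=0$ and $\id_\Lambda N\leq 1\iff \Hom_\Lambda(\tau_\Lambda^{-1}N,\Lambda)=0$ to show that the property $\pd\leq 1$ is inherited by predecessors once (b) holds; this is where the work lies, and it is not captured by your one-line summary. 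Similarly, in $(c)\Rightarrow(b)$ your appeal to ``a dual argument using $\mathcal{R}$'' presupposes information about $\mathcal{R}_\Lambda$ that has not been established from (c) alone; the HRS route passes through the nontrivial fact that $\mathop{\text{ind}}\Lambda=\mathcal{L}_\Lambda\cup\mathcal{R}_\Lambda$ (recorded in the paper as Remark~\ref{remark}), which is itself a consequence of the equivalent conditions rather than something available a priori.
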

\begin{remark}
\label{remark}
It was also shown in $\cite{HRS}$ that $\mathop{\text{ind}}\Lambda=\mathcal{L}_{\Lambda}\cup\mathcal{R}_{\Lambda}$.  Since $\mathcal{L}_{\Lambda}$ is closed under predecessors, $\mathcal{R}_{\Lambda}$ is closed under successors, and $\mathop{\text{ind}}\Lambda=\mathcal{L}_{\Lambda}\cup\mathcal{R}_{\Lambda}$, it is easy to see that $(\mathop{\text{add}}\mathcal{R}_{\Lambda},\mathop{\text{add}}(\mathcal{L}_{\Lambda}\setminus\mathcal{R}_{\Lambda}))$ is a split torsion pair for any quasi-tilted algebra $\Lambda$.
\end{remark}

\subsection{Auslander-Reiten Translations}
In this section, we record several properties of the Auslander-Reiten translations which will be needed in the proof of a preliminary result.  For further reference, see $\cite{ASS}$.
\begin{prop}$\emph{\cite[IV,~Proposition~2.10]{ASS}}$
\label{Basic}
Let $M$ and $N$ be indecomposable $\Lambda$-modules.
\begin{enumerate}
\item[$\emph{(a)}$] $\tau_{\Lambda}M$ is zero if and only if $M$ is projective.
\item[$\emph{(b)}$] $\tau_{\Lambda}^{-1}N$ is zero if and only if $N$ is injective.
\item[$\emph{(c)}$] If $M$ is a nonprojective module, then $\tau_{\Lambda}M$ is indecomposable and noninjective and $\tau_{\Lambda}^{-1}\tau_{\Lambda}M\cong M$.
\item[$\emph{(d)}$] If $N$ is a noninjective module, then $\tau_{\Lambda}^{-1}N$ is indecomposable and nonprojective and $\tau_{\Lambda}\tau_{\Lambda}^{-1}N\cong N$.
\end{enumerate}
\end{prop}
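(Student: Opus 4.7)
The plan is to use the transpose construction of the Auslander--Reiten translation. For an indecomposable $\Lambda$-module $M$ with minimal projective presentation
\[
P_1 \xrightarrow{p_1} P_0 \xrightarrow{p_0} M \to 0,
\]
applying $\Hom_\Lambda(-,\Lambda)$ produces a map $\Hom_\Lambda(p_1,\Lambda)$ whose cokernel is the transpose $\operatorname{Tr} M$, a $\Lambda^{\text{op}}$-module, and then $\tau_\Lambda M = D\operatorname{Tr} M$. Dually, $\tau_\Lambda^{-1} N = \operatorname{Tr} D N$ is read off a minimal injective copresentation of $N$.

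For (a), if $M$ is projective then the minimal projective presentation has $P_1 = 0$, so $\operatorname{Tr} M = 0$ and $\tau_\Lambda M = 0$. Conversely, suppose $\tau_\Lambda M = 0$; then $\operatorname{Tr} M = 0$, i.e.\ $\Hom_\Lambda(p_1,\Lambda)$ is surjective. By minimality of the presentation, $\operatorname{im}(p_1) \subseteq \operatorname{rad} P_0$, so the image of every composition $f\circ p_1$ with $f \colon P_0 \to \Lambda$ lies inside $\operatorname{rad} \Hom_\Lambda(P_1,\Lambda)$; surjectivity therefore forces $\Hom_\Lambda(P_1,\Lambda) = \operatorname{rad}\Hom_\Lambda(P_1,\Lambda)$, and Nakayama's lemma gives $P_1 = 0$, so $M \cong P_0$ is projective. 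Part (b) is entirely dual, using the isomorphism $\tau_\Lambda^{-1} N = \operatorname{Tr} DN$ and a minimal injective copresentation.

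For (c) and (d), the key ingredient is the Auslander--Bridger duality: the transpose induces mutually quasi-inverse equivalences between the stable categories of $\mathop{\text{mod}}\Lambda$ and $\mathop{\text{mod}}\Lambda^{\text{op}}$ modulo projectives, preserving indecomposability and satisfying $\operatorname{Tr}\circ\operatorname{Tr} \cong \id$ up to projective summands. Since the standard duality $D$ sends indecomposable non-projective $\Lambda^{\text{op}}$-modules to indecomposable non-injective $\Lambda$-modules, when $M$ is indecomposable non-projective we obtain that $\tau_\Lambda M = D\operatorname{Tr} M$ is indecomposable and non-injective, and then $\tau_\Lambda^{-1}\tau_\Lambda M = \operatorname{Tr} D D\operatorname{Tr} M \cong \operatorname{Tr}\operatorname{Tr} M \cong M$ in the stable category; since $M$ has no projective summand, the isomorphism holds in $\mathop{\text{mod}}\Lambda$ itself. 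Part (d) follows by the symmetric argument. The main obstacle is the Auslander--Bridger stable-category duality itself, which demands a careful bookkeeping of projective summands in the kernel and cokernel of $\operatorname{Tr}$ under change of minimal projective presentation; once it is in place, the four assertions reduce to essentially formal consequences.
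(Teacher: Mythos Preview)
The paper does not give its own proof of this proposition; it is simply recorded as a citation of \cite[IV,~Proposition~2.10]{ASS} in the preliminaries, with no argument supplied. Your proposal is correct and follows exactly the standard proof found in that reference: the transpose description $\tau_\Lambda = D\operatorname{Tr}$ via a minimal projective presentation for parts (a)--(b), and the Auslander--Bridger stable duality $\operatorname{Tr}\colon \underline{\mathop{\text{mod}}}\,\Lambda \to \underline{\mathop{\text{mod}}}\,\Lambda^{\text{op}}$ for parts (c)--(d). There is nothing to compare, since the paper offers no alternative argument.

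One small point of precision in your argument for (a): the claim that $f\circ p_1$ lies in $\operatorname{rad}\Hom_\Lambda(P_1,\Lambda)$ for every $f\colon P_0\to\Lambda$ is true, but the cleanest justification is that minimality means $p_1$ is a radical morphism in the category of projectives, and the functor $\Hom_\Lambda(-,\Lambda)$ is a duality between $\operatorname{proj}\Lambda$ and $\operatorname{proj}\Lambda^{\text{op}}$, hence preserves radical morphisms; from this the Nakayama step follows immediately. Your phrasing via $\operatorname{im}(p_1)\subseteq\operatorname{rad}P_0$ is equivalent but leaves that translation implicit.
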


\begin{prop}$\emph{\cite[IV,~Corollary~2.14]{ASS}}$
\label{AR Corollary}
Let $M$ and $N$ be two $\Lambda$-modules.
\begin{enumerate}
 \item[$\emph{(a)}$] If $\pd_{\Lambda}M\leq1$ and $N$ is arbitrary, then $\emph{Ext}_{\Lambda}^1(M,N)\cong D\emph{Hom}_{\Lambda}(N,\tau_{\Lambda}M)$.
 \item[$\emph{(b)}$] If $\id_{\Lambda}N\leq1$ and $M$ is arbitrary, then $\emph{Ext}_{\Lambda}^1(M,N)\cong D\emph{Hom}_{\Lambda}(\tau_{\Lambda}^{-1}N,M)$.
 \end{enumerate}
\end{prop}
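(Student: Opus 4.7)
The plan is to derive both isomorphisms from the definition of the Auslander-Reiten translate $\tau_\Lambda$ via the Nakayama functor $\nu = D\Hom_\Lambda(-,\Lambda)$ and the adjunction $D\Hom_\Lambda(P,N) \cong \Hom_\Lambda(N,\nu P)$, which is natural in $N$ for any projective $P$.

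For part (a), I would start from a minimal projective presentation $P_1 \to P_0 \to M \to 0$. The hypothesis $\pd_\Lambda M \leq 1$ upgrades this to a short exact sequence $0 \to P_1 \to P_0 \to M \to 0$. Applying $\Hom_\Lambda(-,N)$ produces a four-term exact sequence identifying $\Ext_\Lambda^1(M,N)$ with the cokernel of $\Hom_\Lambda(P_0,N) \to \Hom_\Lambda(P_1,N)$. Applying the $k$-duality $D$ and rewriting each $D\Hom_\Lambda(P_i,N)$ as $\Hom_\Lambda(N,\nu P_i)$ via the Hom-Nakayama adjunction, together with left exactness of $\Hom_\Lambda(N,-)$ and the description of $\tau_\Lambda M$ as the kernel of $\nu P_1 \to \nu P_0$ (valid precisely under $\pd_\Lambda M \leq 1$), yields $D\Ext_\Lambda^1(M,N) \cong \Hom_\Lambda(N,\tau_\Lambda M)$. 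Dualizing once more gives (a).

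Part (b) is the dual: use a minimal injective copresentation $0 \to N \to I^0 \to I^1 \to 0$, which is exact because $\id_\Lambda N \leq 1$, together with the dual adjunction $D\Hom_\Lambda(M,I) \cong \Hom_\Lambda(\nu^{-1}I,M)$ for injective $I$. The analogous diagram chase, using the identification of $\tau_\Lambda^{-1} N$ with the cokernel of $\nu^{-1}I^0 \to \nu^{-1}I^1$ under this hypothesis, produces $\Ext_\Lambda^1(M,N) \cong D\Hom_\Lambda(\tau_\Lambda^{-1}N,M)$.

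The main obstacle, and the place where the dimension hypothesis does its real work, is the identification of $\tau_\Lambda M$ (respectively $\tau_\Lambda^{-1} N$) with the kernel (respectively cokernel) coming from the Nakayama functor applied to the minimal presentation. Without such a bound one obtains only a \emph{stable} isomorphism, modulo morphisms factoring through injectives or projectives; the hypothesis is exactly what ensures those correction terms vanish. Once that identification is accepted, the remainder reduces to the Hom-Nakayama adjunction, $k$-duality, and left exactness of $\Hom_\Lambda(N,-)$.
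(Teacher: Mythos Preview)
The paper does not prove this proposition at all; it is quoted verbatim from \cite[IV,~Corollary~2.14]{ASS} as a background result and used later without argument. So there is nothing in the paper to compare your proof against.

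That said, your argument is essentially the standard textbook derivation and is correct. One small inaccuracy in your commentary: the identification $\tau_\Lambda M = \ker(\nu P_1 \to \nu P_0)$ is the \emph{definition} of $\tau_\Lambda M$ and holds for any $M$, not only when $\pd_\Lambda M \le 1$. The place where the hypothesis actually does its work is precisely where you first invoke it, namely in upgrading the minimal presentation $P_1 \to P_0 \to M \to 0$ to a short exact sequence $0 \to P_1 \to P_0 \to M \to 0$; this is what lets you identify $\Ext^1_\Lambda(M,N)$ with the cokernel of $\Hom_\Lambda(P_0,N)\to\Hom_\Lambda(P_1,N)$. Your final paragraph slightly misattributes the role of the hypothesis, but the proof itself is unaffected. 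The dual remarks apply to part~(b).
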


\section{Main Result}
  We begin with a preliminary result.  We provide a sufficient condition for an algebra to be tilted.
\begin{prop}
\label{prop1}
Let $\Lambda$ be an algebra.  If there exists a tilting module $T$ in $\mathop{\emph{mod}}\Lambda$ such that the induced torsion pair $(\mathcal{T}(T),\mathcal{F}(T))$ splits and $\id_{\Lambda}X\leq1$ for every $X\in\mathcal{T}(T)$, then $\Lambda$ is tilted.
\end{prop}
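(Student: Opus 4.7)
The natural strategy is to verify the hypotheses of Theorem \ref{Sincere} using the tilting module $T$ itself as the sincere witness module. Thus the proof breaks into two pieces: (i) check that $T$ is sincere, and (ii) check that for every indecomposable $Z \in \mathop{\text{ind}}\Lambda$, either $\Hom_\Lambda(Z, T) = 0$ or $\Hom_\Lambda(T, \tau_\Lambda Z) = 0$.

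For sincerity, I would simply invoke condition (3) in Definition \ref{Tilting}: the short exact sequence $0 \to \Lambda \to T' \to T'' \to 0$ with $T' \in \add T$ yields, for each indecomposable projective summand $P$ of $\Lambda$, a nonzero map $P \hookrightarrow T'$, and hence a nonzero map $P \to T$. Therefore $T$ is sincere.

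For the second condition, I would fix an indecomposable $Z$ and argue case by case. If $Z$ is projective then $\tau_\Lambda Z = 0$ and the second vanishing is immediate. If $Z$ is not projective, then by Proposition \ref{Basic}(c), $\tau_\Lambda Z$ is a nonzero indecomposable noninjective module, and since $(\mathcal{T}(T),\mathcal{F}(T))$ splits, $\tau_\Lambda Z$ lies in $\mathcal{T}(T)$ or $\mathcal{F}(T)$. If $\tau_\Lambda Z \in \mathcal{F}(T)$ then by the definition of $\mathcal{F}(T)$ we get $\Hom_\Lambda(T, \tau_\Lambda Z) = 0$ and we are done. If $\tau_\Lambda Z \in \mathcal{T}(T)$, the hypothesis gives $\id_\Lambda \tau_\Lambda Z \leq 1$, so Proposition \ref{AR Corollary}(b) applied with $N = \tau_\Lambda Z$ and $M = T$ yields
\[
\Ext_\Lambda^1(T, \tau_\Lambda Z) \cong D\Hom_\Lambda(\tau_\Lambda^{-1}\tau_\Lambda Z, T) \cong D\Hom_\Lambda(Z, T),
\]
where the last isomorphism uses Proposition \ref{Basic}(c) since $Z$ is nonprojective. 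But $\tau_\Lambda Z \in \mathcal{T}(T)$ forces $\Ext_\Lambda^1(T, \tau_\Lambda Z) = 0$, so $\Hom_\Lambda(Z, T) = 0$, completing the case analysis.

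The proof is essentially a bookkeeping exercise once the right criterion is chosen; the only subtle step is recognizing that one should apply the Auslander–Reiten formula to $\tau_\Lambda Z$ rather than to $Z$ itself, so that the injective dimension hypothesis on elements of $\mathcal{T}(T)$ can be triggered via the split torsion pair. With this choice, the argument assembles cleanly from the tools already collected in the preliminaries, and then Theorem \ref{Sincere} finishes the job.
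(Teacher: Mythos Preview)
Your proof is correct and follows essentially the same approach as the paper: take $M=T$ in Theorem~\ref{Sincere}, observe that $T$ is sincere from Definition~\ref{Tilting}(3), and use the Auslander--Reiten formula (Proposition~\ref{AR Corollary}(b)) together with $\id_\Lambda \tau_\Lambda Z\le 1$ to convert $\Ext^1_\Lambda(T,\tau_\Lambda Z)=0$ into $\Hom_\Lambda(Z,T)=0$. Your case organization is slightly leaner---you branch immediately on whether $Z$ is projective and then on where $\tau_\Lambda Z$ lies, whereas the paper first branches on where $Z$ lies (invoking Proposition~\ref{split}(c) when $Z\in\mathcal{F}(T)$) and only later isolates the projective subcase---but the substance is identical.
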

\begin{proof}
We need to show there exists a sincere module $M$ in $\mathop{\text{mod}}\Lambda$ such that, for any $X\in\text{ind}\Lambda$, $\text{Hom}_{\Lambda}(X,M)=0$ or $\text{Hom}_{\Lambda}(M,\tau_{\Lambda}X)=0$.  Theorem $\ref{Sincere}$ will then imply $\Lambda$ is tilted.  Since $T$ is a tilting module, it follows easily from definition $\ref{Tilting}$ (3) that $T$ is sincere.  Let $X\in\mathop{\text{ind}}\Lambda$.  Since $(\mathcal{T}(T),\mathcal{F}(T))$ splits, $X\in\mathcal{T}(T)$ or $X\in\mathcal{F}(T)$.  If $X\in\mathcal{F}(T)$, then $\tau_{\Lambda}X\in\mathcal{F}(T)$ by proposition $\ref{split}$ (c).  Thus, $\Hom_{\Lambda}(T,\tau_{\Lambda}X)=0$ and we are done.  If $X\in\mathcal{T}(T)$, we have two cases to consider: $\tau_{\Lambda}X\in\mathcal{T}(T)$ or $\tau_{\Lambda}X\in\mathcal{F}(T)$.  If $\tau_{\Lambda}X\in\mathcal{F}(T)$, we are done.  If $\tau_{\Lambda}X\in\mathcal{T}(T)$, then $\id_{\Lambda}(\tau_{\Lambda}X)\leq1$ by assumption.  If $X$ is projective, then $\tau_{\Lambda}X=0$ by proposition $\ref{Basic}$ (a) and $\Hom_{\Lambda}(T,\tau_{\Lambda}X)=0$.  If $X$ is nonprojective, then $\tau_{\Lambda}^{-1}\tau_{\Lambda}X\cong X$ by proposition $\ref{Basic}$ (c) and proposition $\ref{AR Corollary}$ (b) implies 
\[
\Ext_{\Lambda}^1(T,\tau_{\Lambda}X)\cong D\Hom_{\Lambda}(\tau_{\Lambda}^{-1}\tau_{\Lambda}X,T)\cong D\Hom_{\Lambda}(X,T)=0.
\]
We conclude by theorem $\ref{Sincere}$ that $\Lambda$ is  a tilted algebra.

\end{proof}
We are now ready for the main result.
\begin{theorem}$\emph{\cite[Corollary~2.3.6]{HRS}}$
If $\Lambda$ is a representation-finite quasi-tilted algebra, then $\Lambda$ is tilted.
\begin{proof}

Remark $\ref{remark}$ shows we have the split torsion pair $(\mathop{\text{add}}\mathcal{R}_{\Lambda},\mathop{\text{add}}(\mathcal{L}_{\Lambda}\setminus\mathcal{R}_{\Lambda}))$ in $\mathop{\text{mod}}\Lambda$.  By Theorem $\ref{HRS1}$ (d), we know $\mathcal{R}_{\Lambda}$ contains all the injective $\Lambda$-modules.  Since $\Lambda$ is representation-finite, proposition $\ref{repfinite}$ guarantees the existence of a tilting module $T$ such that  $(\mathop{\text{add}}\mathcal{R}_{\Lambda},\mathop{\text{add}}(\mathcal{L}_{\Lambda}\setminus\mathcal{R}_{\Lambda}))=(\mathcal{T}(T),\mathcal{F}(T))$.  We also know $\id_{\Lambda}X\leq1$ for every $X\in\mathcal{R}_{\Lambda}$ by the definition of $\mathcal{R}_{\Lambda}$ and the fact that every indecomposable module $X$ is a successor of itself.  We conclude by proposition $\ref{prop1}$ that $\Lambda$ is tilted.

\end{proof}
\end{theorem}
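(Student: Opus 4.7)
The plan is to reduce the theorem to Proposition \ref{prop1}: I will exhibit a tilting module $T$ whose induced torsion pair $(\mathcal{T}(T),\mathcal{F}(T))$ splits and satisfies $\id_\Lambda X \leq 1$ for every $X \in \mathcal{T}(T)$, and then invoke that proposition directly.

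The natural candidate is already built into the quasi-tilted setting. By Remark \ref{remark}, the pair $(\add \mathcal{R}_{\Lambda}, \add(\mathcal{L}_{\Lambda} \setminus \mathcal{R}_{\Lambda}))$ is a split torsion pair in $\mathop{\text{mod}}\Lambda$, so its torsion class is automatically closed under $\tau_\Lambda^{-1}$. To recognize this torsion pair as being induced by a tilting module, I would appeal to Proposition \ref{repfinite}, which in the representation-finite setting characterizes induced torsion pairs as exactly those whose torsion class contains all indecomposable injectives. That hypothesis is supplied by Theorem \ref{HRS1}(d), which says $\mathcal{R}_{\Lambda}$ contains every indecomposable injective $\Lambda$-module. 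Hence Proposition \ref{repfinite} produces a tilting $T$ with $(\mathcal{T}(T),\mathcal{F}(T)) = (\add \mathcal{R}_{\Lambda}, \add(\mathcal{L}_{\Lambda} \setminus \mathcal{R}_{\Lambda}))$.

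It remains to check the injective-dimension condition in Proposition \ref{prop1}. For any indecomposable $X \in \mathcal{R}_{\Lambda}$, the trivial successor relation $X \leadsto X$ together with the very definition of $\mathcal{R}_{\Lambda}$ gives $\id_\Lambda X \leq 1$; since $\mathcal{T}(T) = \add \mathcal{R}_{\Lambda}$, this bound extends to every module in $\mathcal{T}(T)$. Combined with the fact that the torsion pair splits, Proposition \ref{prop1} applies and concludes that $\Lambda$ is tilted.

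The proof is essentially an assembly of pieces already assembled in the excerpt, so there is no genuinely hard step; the only place where something beyond formal manipulation happens is the passage from a split torsion pair whose torsion class contains the injectives to a tilting module realizing it, and this is precisely where representation-finiteness is used through Proposition \ref{repfinite}. That is therefore the load-bearing step, and the main obstacle one would encounter if one tried to drop the representation-finite hypothesis.
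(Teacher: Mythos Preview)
Your proof is correct and follows essentially the same route as the paper: invoke Remark~\ref{remark} for the split torsion pair, use Theorem~\ref{HRS1}(d) together with Proposition~\ref{repfinite} to realize it as $(\mathcal{T}(T),\mathcal{F}(T))$ for a tilting module $T$, verify the injective-dimension bound from the definition of $\mathcal{R}_\Lambda$, and conclude via Proposition~\ref{prop1}. Your closing remark identifying Proposition~\ref{repfinite} as the step that consumes the representation-finite hypothesis is accurate and matches the structure of the paper's argument.
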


\noindent Mathematics Faculty, University of Connecticut-Waterbury, Waterbury, CT 06702, USA
\it{E-mail address}: \bf{stephen.zito@uconn.edu}

\end{document}